\newcommand{\h}{d\mathcal{H}^{n-1}}
\newtheorem{theorem}{Theorem}[section]
\newtheorem{lemma}{Lemma}[section]
\newtheorem{prop}{Proposition}[section]
\theoremstyle{definition}
\newtheorem{rem}{Remark}[section]
\newcommand{\ds}{\displaystyle}
\newcommand{\R}{\mathbb R}
\newcommand{\de}{\partial}
\newcommand{\eps}{\varepsilon}
\begin{document}
\title{An optimal insulation problem}
 \author{Francesco Della Pietra$^{*}$, \\ Carlo Nitsch$^{*}$,\\Cristina Trombetti%
       \thanks{Universit\`a degli studi di Napoli Federico II, Dipartimento di Matematica e Applicazioni ``R. Caccioppoli'', Via Cintia, Monte S. Angelo - 80126 Napoli, Italia.
       Email: f.dellapietra@unina.it, carlo.nitsch@unina.it, cristina.trombetti@unina.it}
}
\date{}
\maketitle
%

\begin{abstract}
\noindent{\textsc{Abstract.}} 
In this paper we consider a minimization problem which arises from thermal insulation. 
A compact connected set $K$, which represents a conductor of constant temperature, say $1$, is thermally insulated by surrounding it with a layer of thermal insulator, the open set $\Omega\setminus K$ with $K\subset\bar\Omega$. The heat dispersion is then obtained as
\[
\inf\left\{ \int_{\Omega}|\nabla \varphi|^{2}dx +\beta\int_{\de^{*}\Omega}\varphi^{2}d\mathcal H^{n-1} ,\;\varphi\in H^{1}(\R^{n}), \, \varphi\ge 1\text{ in } K\right\},
\]
for some positive constant $\beta$.\\
We mostly restrict our analysis to the case of an insulating layer of constant thickness. We let the set $K$ vary, under prescribed geometrical constraints, and we look for the best (or worst) geometry in terms of heat dispersion. We show that under perimeter constraint the disk in two dimensions is the worst one. The same is true for the ball in higher dimension but under different constraints. We finally discuss few open problems.

\textsc{MSC 2010:} 35J25 - 49Q10 \\
\textsc{Keywords:} shape optimization, optimal insulation, mixed boundary conditions.
\end{abstract}

\section{Introduction}

As energy saving and noise pollution are growing in importance year after year, insulation represents one of biggest challenge for environmental improvement. Even if insulation is one of the oldest and most studied problem in mathematical physics, the mathematics involved is still very tricky especially when one looks at shape optimization questions. In this paper we will consider a domain of given temperature, thermally insulated by surrounding it with a constant thickness of thermal insulator. As the geometry of the set varies (under prescribed geometrical constraints to be specified later) it is reasonable to ask for the best (or worst) choice (in terms of heat dispersion).

Let $K$ be a compact set in $\R^n$ and let $\Omega$ be a bounded connected open set of $\R^{n}$, with $K\subset\bar\Omega$ (here the set $K$ represents a thermally conducting body, surrounded by an amount of insulating material $\Omega\setminus K$). The heat dispersion is given by
\begin{equation}
\label{dispintro}
I_{\beta}(K;\Omega)=\inf\left\{ \int_{\Omega}|\nabla \varphi|^{2}dx +\beta\int_{\de^{*}\Omega}\varphi^{2}d\mathcal H^{n-1} ,\;\varphi\in H^{1}(\R^{n}), \, \varphi\ge 1\text{ in } K\right\}.
\end{equation}
where $\beta>0$ is a fixed parameter depending on the physical characteristics of the problem, and $\de^{*}\Omega$ stands for the reduced boundary of $\Omega$. 


If $\Omega$ has Lipschitz boundary and $K\subset\Omega$, then there exists a minimiser $u$ of \eqref{dispintro} which solves in the weak sense the Robin-Dirichlet problem
\begin{equation}
\label{elinsintro}
\begin{cases}
\Delta u=0 &\text{in }\Omega\setminus K,\\
u=1 &\text{in } K,\\
\frac{\de u}{\de \nu} +\beta u=0 &\text{on }\de \Omega,
\end{cases}
\end{equation}
and it holds that
\[
I_\beta(K,\Omega)=\beta\int_{\de\Omega}u \, dx.
\]
In what follows we will use $D$ to denote a bounded open connected subset of $\Omega$ and in that case the notation $I_\beta(D,\Omega)$ stands for $I_\beta(\bar D,\Omega)$.

If $D=\Omega$ (no insulation),  we set
\[
I_{\beta}(D;D)=\beta P(D),
\]
where $P(D)$ stands for the classical perimeter in $\R^n$.
One of the most intriguing questions in thermal insulation is to study the configurations of $D$ and $\Omega$ for which $I_\beta(D,\Omega)$ is minimal or maximal, under reasonable geometric constraints.

Shape optimization problems of this or similar nature already appeared in several papers, as for instance in \cite{acbu86,brca80,bubu05,bbn1,bbn2,bu88,dpnst,d,er,f}. Related results are contained in \cite{dpp}.

{  In Section \ref{section_set} we consider $I_\beta(D,\Omega)$ when $\Omega$ is the Minkowski sum
\[
\Omega=D+\delta B
\]
(see Section 2 for its definition),
 $B$ is the unit ball centered at the origin, and $\delta$ is a positive number. 
 This corresponds to accomplish the insulation of $D$ by adding an insulation layer of constant thickness. 
 For the sake of simplicity, we set
\[
I_{\beta,\delta}(D)\equiv I_\beta(D,D+\delta B).
\]

In Section \ref{max2} we consider the planar case and prove that in the class of smooth domains $D$ with given perimeter $P$, and fixed $\delta>0$, the disk maximizes $I_{\beta,\delta}$. 
Therefore, we deduce that the worst possible geometry (in terms of heat dispersion) is the symmetric one.}

Thereafter,  in Section \ref{maxn}, we prove that in higher dimension ($n\ge 3$)  balls still maximize $I_{\beta,\delta}$, but our result finds its natural generalization in the class of convex domains $D$ with given $(n-1)-$quermassintegral. 

The study of the worst possible shape, the one with the highest heat dispersion, is justified by the idea that, in designing the optimal insulation, one has an a-priori bound of the heat dispersion in terms of geometric quantities alone.

However, it is of great interest as well, to study the existence of the best geometries: those which minimize heat dispersion.
In the last section we make some remarks on that, and discuss few open problems. For instance we consider the minimization of $I_{\beta,\delta}(D)$ in the class of sets of given perimeter or measure. Comparison with classical Capacitary problem (which corresponds to the case $\beta\to\infty$) suggests that the minimization of 
$I_{\beta,\delta}(D)$ when we vary $D$ by preserving its measure, occurs when $D$ is a ball.
Even more complicated is the case where both $\Omega$ and $D$ vary among sets of fixed volume. However such a case requires a careful formulation otherwise the problem can be ill-posed, in the sense that (against common sense) sometimes insulation might increase heat dispersion. In particular we show that, when $D=B_R$ (open ball of radius $R$), and $\beta$ is small enough, 
there exists a positive constant $\delta_{0}$ (which depends on $\beta$ and $R$ alone) such that for any bounded connected open set $\Omega$, with $\Omega\supset B_R$ and $|\Omega \setminus B_R|<\delta_{0}$, then 
\[
I_{\beta}(B_R;\Omega)>I_{\beta}(B_R;B_R).
\]

\section{Insulating problem}\label{section_set}
Given an open, bounded, connected  set  $D\subset \R^{n}$, and $\Omega=D+\delta B$, $\delta>0$, where $B$ is the unit ball centered at the origin and $D+\delta B$ stands for the Minkowski sum of the two sets, that is
\[
\Omega=D+\delta B=\{x+\delta y,\; x\in \Omega,\, y\in B\},
\]
we are interested in the study of the properties of
\begin{equation}
\label{disp}
I_{\beta,\delta}(D)=\inf\left\{ \int_{\Omega}|\nabla \varphi|^{2}dx +\beta\int_{\de^{*}\Omega}\varphi^{2}d\mathcal H^{n-1} ,\;\varphi\in H^{1}(\R^{n}), \, \varphi\ge 1\text{ in } \bar D\right\}
\end{equation}
with $\beta>0$. 

We denote by $W_{\bar D}^{1,2}(\Omega)$  the closure in $W^{1,2}(\Omega)$ of $\{ \left.\varphi\right|_\Omega :\varphi  \in {\mathcal C}_c^{\infty }(\R^{n}) \> \hbox{with} \,\bar D \cap \hbox {supp} \varphi = \emptyset \} $.
If $\Omega$ has Lipschitz boundary, then there exists a unique minimizer $u$ of \eqref{disp} such that $u-1 \in W_{\bar D}^{1,2}(\Omega)$ which solves
\begin{equation}
\label{elins}
\begin{cases}
\Delta u=0 &\text{in }\Omega\setminus \bar D,\\
u=1 &\text{in } \bar D,\\
\frac{\de u}{\de \nu} +\beta u=0 &\text{on }\de \Omega.
\end{cases}
\end{equation}
in the sense that
\begin{equation}
\int_{\Omega\setminus \bar D} \nabla u \nabla \varphi \, dx + \beta\int_{\de \Omega}u \, \varphi\,d\mathcal H^{n-1}=0
\end{equation}
for all $ \varphi \in W_{\bar D}^{1,2}(\Omega)$.

Then
\begin{equation}
I_{\beta,\delta}(D)=\beta\int_{\de\Omega}u d\mathcal H^{n-1}.
\end{equation}
We will call $I_{\beta,\delta}(D)$ the ``heat dispersion'' and we  observe that if $\beta=0$, the condition on $\de \Omega$ in \eqref{elins} becomes a Neumann condition: $\frac{\de u}{\de \nu}=0$, while if $\beta=+\infty$, the problem has to be meant with a Dirichlet condition on $\de\Omega$: $u=0$. 
\begin{rem}
It holds that
\[
I_{\beta,\delta}(D)\le \textrm{cap}_{2}({ \bar D},\Omega)=\inf
\left\{ \int_{\Omega} |\nabla \varphi|^{2}dx,\;\varphi\in H_{0}^{1}(\Omega), \, \varphi\ge 1\text{ in }\bar D\right\}.
\]
Furthermore, by choosing $\varphi=1$ in \eqref{disp} as test function, we get
\[
I_{\beta,\delta}(D)\le \beta P(\Omega),
\]
where $P(\Omega)$ stands for the perimeter of $\Omega$ in $\R^n$. 
\end{rem}

\section{Maximization of the heat dispersion: the planar case}
\label{max2}
In this section we consider the two dimensional case. For an open, bounded,  connected set $D$ we denote by $D^{*}$ the disk having the same perimeter of $D$. We consider $\Omega=D+\delta B$ and the disk $\Omega_*=D^{*}+\delta_{*} B$, where $B$ is the unit disk centered at the origin.

If $D$ is convex, the Steiner formulae state that
\[
\begin{array}{ll}
|\Omega|=|D|+P(D)\delta +\pi\delta^{2},& P(\Omega)=P(D)+2\pi \delta,
\\[.2cm]
|\Omega_*|=|D^{*}|+P(D^{*})\delta_{*} +\pi\delta_{*}^{2},  & P(\Omega_*)=P(D^{*})+2\pi \delta_{*},
\end{array}
\]
If we ask that the area of the insulating material $\Omega\setminus \bar D$ remains constant, then
\[
|\Omega|-|D|=P(D)\delta +\pi \delta^{2}=|\Omega_*|-|D^{*}|= P(D^{*})\delta_{*}+\pi \delta_{*}^{2}.
\]
Then, since $P(D)=P(D^{*})$ then $\delta =\delta_{*}$ and, as byproduct, $P(\Omega)=P(\Omega_*)$. On the contrary, if $\delta =\delta^{*}$, then $|\Omega|-|D|=|\Omega_{*}|-|D^{*}|$.

For a general bounded domain with piecewise $C^{1}$ boundary, it holds that
\begin{equation}
\label{steingen}
|\Omega|\le |D|+P(D)\delta +\pi\delta^{2},\quad P(\Omega)\le P(D)+2\pi \delta,
\end{equation}
hence $\delta=\delta_{*}$ implies 
\[
|\Omega|-|D|\le |\Omega_*|-|D^{*}|,
\]
which means that the area of the insulating material increases keeping fixed the perimeter $P(D)$ and the thickness $\delta$.

\begin{theorem}
Let $D$ be an open, bounded, connected set  of $\R^{2}$ with piecewise $C^{1}$ boundary. Then 
\[
I_{\beta,\delta}(D)\le I_{\beta,\delta}(D^{*}),
\] 
where $D^{*}$ is the disk having the same perimeter of $D$, that is $P(D^{*})=P(D)$. 
\end{theorem}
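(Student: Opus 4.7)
The strategy is to construct an admissible test function for $I_{\beta,\delta}(D)$ out of the explicit radial minimizer on the disk $\Omega_*=D^*+\delta B$, and then to compare the resulting energy with $I_{\beta,\delta}(D^*)$. The analytical tools will be the coarea formula together with the two Steiner-type inequalities recorded in \eqref{steingen}.

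Let $R=P(D)/(2\pi)$, so that $D^*=B_R$, and let $v$ be the minimizer on $\Omega_*=B_{R+\delta}$; in the annulus $\{R<|x|<R+\delta\}$, $v$ is radial and harmonic, equals $1$ on $\partial B_R$, and satisfies the Robin condition on $\partial B_{R+\delta}$. Set $d(x)=\dist(x,\bar D)$, which is $1$-Lipschitz with $|\nabla d|=1$ a.e.\ on $\Omega\setminus\bar D$, vanishes on $\partial D$, and equals $\delta$ on $\partial\Omega$. Define the competitor
\[
\tilde v(x)=\begin{cases} 1, & x\in\bar D,\\ v(R+d(x)), & x\in\Omega\setminus\bar D.\end{cases}
\]
Since $v(R)=1$, $\tilde v$ is Lipschitz on $\Omega$ with $\tilde v\equiv 1$ on $\bar D$, so $\tilde v$ (suitably extended to $\R^n$) is admissible in \eqref{disp}.

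The interior energy is controlled by the coarea formula. Using $|\nabla d|=1$ a.e.\ and the fact that the level sets $\{d=t\}$ coincide with $\partial(D+tB)$ for a.e.\ $t\in(0,\delta)$,
\[
\int_{\Omega\setminus\bar D}|\nabla\tilde v|^2\,dx=\int_0^\delta v'(R+t)^2\, P(D+tB)\,dt.
\]
Applying \eqref{steingen} with $\delta$ replaced by $t$ gives $P(D+tB)\le P(D)+2\pi t=2\pi(R+t)$, and since $v'(R+t)^2\ge 0$ we obtain
\[
\int_{\Omega\setminus\bar D}|\nabla\tilde v|^2\,dx\le \int_0^\delta v'(R+t)^2\cdot 2\pi(R+t)\,dt=\int_{\Omega_*\setminus\overline{D^*}}|\nabla v|^2\,dx.
\]
For the boundary term, $\tilde v\equiv v(R+\delta)$ on $\partial\Omega$, so a second application of \eqref{steingen} yields
\[
\beta\int_{\partial\Omega}\tilde v^2\,d\mathcal H^1=\beta v(R+\delta)^2 P(\Omega)\le \beta v(R+\delta)^2\cdot 2\pi(R+\delta)=\beta\int_{\partial\Omega_*}v^2\,d\mathcal H^1.
\]
Summing the two estimates and using that $\tilde v$ is admissible and $v$ is the minimizer on $\Omega_*$ gives $I_{\beta,\delta}(D)\le I_{\beta,\delta}(D^*)$.

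The main obstacle lies in choosing a test function that simultaneously matches the Minkowski geometry of $\Omega$ and reduces to the radial minimizer $v$ on $\Omega_*$. Since $D$ is not radial, one cannot simply transplant $v$ from $\Omega_*$ to $\Omega$ by rescaling; the decisive observation is that $\Omega$ is foliated by the parallel curves $\{d=t\}$, so that a profile depending only on $d(x)$ is naturally adapted to this foliation. The disk is the equality case of the Steiner bound at every level $t\in[0,\delta]$, which is precisely what allows both the volume integral (via coarea) and the boundary integral to be compared to their counterparts on $\Omega_*$ using the same one-dimensional profile. One should verify that the Steiner inequality is available at every level and not only at $t=\delta$, which follows at once by viewing $D+tB$ as the Minkowski sum with parameter $t$ and applying \eqref{steingen} again.
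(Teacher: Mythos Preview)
Your proof is correct and follows essentially the same approach as the paper's: both transplant the radial minimizer $v$ via the distance function, setting the test function equal to $v(R+d(x))$, and then compare energies using coarea together with the Steiner inequality $P(D+tB)\le P(D)+2\pi t$ at every level. The only cosmetic difference is that the paper parametrizes the coarea integral by the value of the function (introducing $g(t)=|Dv|_{v=t}$ and its primitive $G$), whereas you parametrize directly by the distance $t$; since $G(R+\cdot)$ is exactly the radial profile of $v$, the two test functions coincide and the arguments are equivalent.
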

\begin{proof}
Let $v$ be the radial minimizer of $I_{\beta,\delta}(D^{*})$.
 We denote by $R$ be the radius of $D^{*}$, $\Omega_{*}=D^{*}+\delta B$, $v_m=v(R+\delta) = \min_{\Omega_{*}} v$ and by $  \max_{\Omega_*} v=v(R)=1$. Being $v$ radial, the modulus of the gradient of $v$ is constant on the level lines of $v$.

Let us consider the function 
\[
g(t)=|Dv|_{v=t},\; v_m<t \le 1.
\]

 Let $d(x)$ be the distance of a point $x$ from $D$ and set  
 \[
 w(x)=G\left(R+d(x)\right),\quad  x \in\Omega,\qquad \text{where } G^{-1}(t)=R+\displaystyle \int_{t}^{1} {\frac{1}{g(s)}}ds.
 \]
 By construction $w \in H^1(\Omega)$ and, being $G$ decreasing, it results:
 
 \begin{equation}
 \label{testfunctions}
 \begin{array}{ll}
 &   \max_{\Omega} w = \left.w\right|_{\de D}= 1= G(R); \cr \\
 &  w_m= \min_{\Omega} w = \left.w\right|_{\de \Omega} =G(R+\delta) = v_m  \cr  \\
 &  |Dw|_{w=t} = |Dv|_{v=t} = g(t) \quad w_m \le t \le 1.
 \end{array}
 \end{equation}
Hence $w$ is a test function. Then
\[
I_{\beta,\delta}(D)\le \int_{\Omega\setminus \bar D}|\nabla w|^{2}dx +\beta\int_{\de\Omega}w^{2}d\mathcal H^{1}
\]
Let 
\[
E_t = \{ x \in \Omega : w(x) >t\} = \{ x \in \Omega : d(x) < G^{-1}(t)\}=D+G^{-1}(t)B,
\]
and let
\[
B_t = \{ x \in \Omega^\star: v(x) >t\}. 
\]  
\noindent  By Steiner formula \eqref{steingen} we get 
\begin{equation}\label{perimetris}
P(E_{t}) \le P(D)+2\pi G^{-1}(t) =P(D^{*})+2\pi G^{-1}(t)=2\pi(R+G^{-1}(t))=P(B_{t})
\end{equation}
for every $w_m < t \le 1$.
Hence, by \eqref{perimetris},
\[
\int_{w=t}|Dw|\,d\mathcal H^{1} =g(t) P(E_t)\le g(t) P(B_t)= \int_{v=t}|Dv| \,d\mathcal H^{1}, \qquad w_m < t \le 1
\]
then, by co-area formula and \eqref{testfunctions},
\begin{eqnarray*}\label{energie}
\int_{\Omega\setminus \bar D} |Dw|^2\,dx&=&\int_{w_m}^{1} g(t)P(E_t)\,dt
\\
& \le & \int_{w_m}^{1} g(t)P(B_t)\,dt = \int_{v_m}^{1} g(t)P(B_t)\,dt 
\\
&=& \int_{\Omega_{*}\setminus \bar D^{*}}|Dv|^2\,dx. \notag
\end{eqnarray*}

As regards the boundary terms, since by construction $w = w_{m}=v_m$ on $ \de \Omega$ (see \eqref{testfunctions}), and $P(\Omega) = P(\Omega_*)$, we have
\begin{equation*}
\label{boundary}
\int_{\partial \Omega} w^2 \, d\mathcal H^{1} = w_{m}^{2} P(\Omega) = v_m^2 P(\Omega_{*}) = \int_{\partial \Omega_*} v^2 \,d \mathcal H^{1}
\end{equation*}
Hence
\begin{multline*}
I_{\beta,\delta}(D)\le \int_{\Omega\setminus D}|\nabla w|^{2}dx +\beta\int_{\de\Omega}w^{2}d\mathcal H^{1} 
\\ = 
\int_{\Omega_*\setminus D^{*}}|\nabla v|^{2}dx +\beta\int_{\de\Omega_*}v^{2}d\mathcal H^{1} =
I_{\beta,\delta}(D^{*}).
\end{multline*}
\end{proof}
%
%
%

\section{The $n$-dimensional case}
\label{maxn}
\subsection{Preliminaries on convex sets}
Here we list some basic facts on convex sets. All the notions and results can be found, for example, in \cite{bz,schn}.
Let $K$ be a nonempty, bounded, convex set in $\R^{n}$ and let $\delta>0$. Then the Steiner formulas for the volume and the perimeter  read  as
\begin{eqnarray}
\label{vol}
|K+\delta B|&=&\sum_{j=0}^{n }\binom{n }{j}W_{j}(K)\delta^j
\\
&=& |K| + n W_{1}(K)\delta+\frac{n(n-1)}{2}W_2(K)\delta^{2} + ... +\omega_{n}\delta^n. \notag \\
\label{per}
P(K+\delta B)&=&n\sum_{j=0}^{n-1}\binom{n-1}{j}W_{j+1}(K)\delta^j
\\
&=& P(K)+n(n-1)W_2(K)\delta + ... +n\omega_{n}\delta^{n-1}, \notag
\end{eqnarray}
where $B$ is the unit ball in $\R^{n}$ centered at the origin, whose measure is denoted by $\omega_{n}$. The coefficients $W_{j}(K)$ are the so-called quermassintegrals of $K$. 

It immediately follows that
\begin{equation}\label{derivata}
\lim_{\delta \to 0^+}\frac{P(K+\delta B)-P(K)}{\delta}=n(n-1)W_2(K).
\end{equation}
If $K$ as $C^2$ boundary, with nonzero Gaussian curvature, the quermassintegrals are related to the principal curvatures of $\de K$. Indeed, in such a case
\begin{equation}
\label{quersmooth}
W_i(K)=\frac 1 n \int_{\de K} H_{i-1}(x) d \mathcal H^{n-1}, \quad
i={1,\ldots n}.
\end{equation}
Here $H_j$ denotes the $j-$th normalized elementary symmetric function of the principal curvatures of $\de K$,
that is $H_0=1$ and
\[
H_j(x)= \binom{n-1}{j}^{-1} \sum_{1\le i_1\le \ldots \le i_j\le n-1}
\kappa_{i_1}(x)\cdots \kappa_{i_j}(x),\quad j={1,\ldots,n-1},
\]
where $\kappa_1(x),...,\kappa_{n-1}(x)$ are the principal curvatures at a point $x\in \de K$.
In particular, by \eqref{derivata} and \eqref{quersmooth} we get also that
\begin{equation*}
\lim_{\delta \to 0^+}\frac{P(K+\delta B)-P(K)}{\delta}=(n-1)\int_{\partial K} H_{1}(x) \h,
\end{equation*}
where $H_{1}(x)$ is the mean curvature of $\partial K$ at a point $x$.

The Aleksandrov-Fenchel inequalities state that
\begin{equation}
  \label{afineq}
\left( \frac{W_j(K)}{\omega_n} \right)^{\frac{1}{n-j}} \ge \left(
  \frac{W_i(K)}{\omega_n} \right)^{\frac{1}{n-i}}, \quad 0\le i < j
\le n-1,
\end{equation}
where the inequality is replaced by an equality if and only if $K$ is a ball. 

In what follows, we use the Aleksandrov-Fenchel inequalities for
particular values of $i$ and $j$.
When $i=0$ and $j=1$, we have the classical isoperimetric inequality:
\[
P(K) \ge n \omega_n^{\frac 1 n} |K|^{1-\frac 1 n}.
\]
Moreover, if $i=k-1$, and $j=k$, we have
\[
W_k(K) \ge \omega_n^{\frac{1}{n-k+1}} W_{k-1}(K)^{\frac{n-k}{n-k+1}}.
\]

Let us denote by $K^{*}$ a ball such that $W_{n-1}(K)=W_{n-1}(K^{*})$. Then by Aleksandrov-Fenchel inequalities \eqref{afineq}, for $0\le i < n-1$
\[
\left(\frac{W_{i}(K^{*})}{\omega_{n}}\right)^{\frac{1}{n-i}}= \frac{W_{n-1}(K^{*})}{\omega_{n}}=\frac{W_{n-1}(K)}{\omega_{n}}
\ge \left(\frac{W_{i}(K)}{\omega_{n}}\right)^{\frac{1}{n-i}}.
\]
hence 
\begin{equation}
\label{afapp}
W_{i}(K)\le W_{i}(K^{*}),\quad 0\le i\le n-1
\end{equation}

\subsection{Maximization of the heat dispersion in convex domains}


\begin{theorem}
Let $D$ be an open, bounded, convex set  of $\R^{n}$. Then 
\[
I_{\beta,\delta}(D)\le I_{\beta,\delta}(D^{*}),
\] 
where $D^{*}$ is the ball having the same $W_{n-1}$ quermassintegral of $D$, that is $W_{n-1}(D)=W_{n-1}(D^{*})$. 
\end{theorem}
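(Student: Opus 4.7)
The plan is to follow the two-dimensional argument of the previous section, constructing a test function on $\Omega = D + \delta B$ modeled after the radial minimizer on $\Omega_{*} = D^{*} + \delta B$, and then to replace the exact perimeter identity $P(\Omega) = P(\Omega_{*})$ used in dimension two by a polynomial comparison of Steiner expansions derived from the Aleksandrov--Fenchel inequalities in the form \eqref{afapp}. Throughout, convexity of $D$ is crucial: it turns the Steiner formula \eqref{per} into an equality rather than a mere inequality.

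More concretely, let $v$ be the radial minimizer of $I_{\beta,\delta}(D^{*})$, set $v_{m} = \min_{\Omega_{*}} v$, and define $g(t) = |\nabla v|_{v = t}$ for $v_{m} < t \le 1$; this is well-defined since $|\nabla v|$ is constant on the level sets of a radial function. Introduce the decreasing function $G$ via $G^{-1}(t) = \int_{t}^{1} g(s)^{-1}\, ds$, so that $G^{-1}(1) = 0$ and $G^{-1}(v_{m}) = \delta$. Writing $R$ for the radius of $D^{*}$ and $d(x) = \dist(x, D)$, I would then set $w(x) = G(R + d(x))$ on $\Omega$. By construction $w \equiv 1$ on $\bar D$ and $w \equiv v_{m}$ on $\de \Omega$, so $w$ is admissible in \eqref{disp}; moreover $|\nabla w|(x) = g(w(x))$ almost everywhere, and the superlevel set $E_{t} = \{w > t\}$ is exactly the parallel body $D + G^{-1}(t) B$.

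The heart of the proof is the pointwise inequality $P(E_{t}) \le P(B_{t})$, where $B_{t} = D^{*} + G^{-1}(t) B$ is the corresponding superlevel set of $v$. By \eqref{per}, at $s = G^{-1}(t) \ge 0$,
\[
P(D + s B) - P(D^{*} + s B) = n \sum_{j = 0}^{n-1} \binom{n-1}{j} \bigl( W_{j+1}(D) - W_{j+1}(D^{*}) \bigr) s^{j}.
\]
The assumption $W_{n-1}(D) = W_{n-1}(D^{*})$ combined with \eqref{afapp} gives $W_{i}(D) \le W_{i}(D^{*})$ for every $i \in \{0, \dots, n-1\}$, while $W_{n}(D) = \omega_{n} = W_{n}(D^{*})$ is automatic. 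Hence every coefficient in the above sum is non-positive, so $P(D + s B) \le P(D^{*} + s B)$ for all $s \ge 0$; taking $s = \delta$ gives in particular $P(\Omega) \le P(\Omega_{*})$, which serves as the surrogate for the perimeter equality used in the planar theorem.

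The rest is routine. By the coarea formula and the identity $|\nabla w| = g(w)$,
\[
\int_{\Omega \setminus \bar D} |\nabla w|^{2}\, dx = \int_{v_{m}}^{1} g(t) P(E_{t})\, dt \le \int_{v_{m}}^{1} g(t) P(B_{t})\, dt = \int_{\Omega_{*} \setminus \bar D^{*}} |\nabla v|^{2}\, dx,
\]
while for the Robin term, since $w \equiv v_{m}$ on $\de\Omega$ and $v \equiv v_{m}$ on $\de\Omega_{*}$,
\[
\int_{\de \Omega} w^{2}\, \h = v_{m}^{2} P(\Omega) \le v_{m}^{2} P(\Omega_{*}) = \int_{\de \Omega_{*}} v^{2}\, \h.
\]
Adding the two bounds and using admissibility of $w$ yields $I_{\beta,\delta}(D) \le I_{\beta,\delta}(D^{*})$. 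The only nontrivial point is the coefficient-wise comparison of the Steiner polynomials; once \eqref{afapp} is invoked, everything else mirrors the planar template, with the one cosmetic difference that the boundary term is controlled by inequality rather than by equality.
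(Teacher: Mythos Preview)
Your proof is correct and follows essentially the same route as the paper's: build the test function from the radial minimizer via the distance to $D$, compare the Steiner polynomials of $D+sB$ and $D^{*}+sB$ term by term using \eqref{afapp}, and finish with coarea for the Dirichlet part and the perimeter inequality $P(\Omega)\le P(\Omega_{*})$ for the Robin part. One cosmetic slip: with your normalization $G^{-1}(t)=\int_{t}^{1}g(s)^{-1}\,ds$ (so that $G^{-1}(1)=0$ and $G^{-1}(v_{m})=\delta$) the test function should be $w(x)=G(d(x))$, not $G(R+d(x))$; the paper instead builds the offset $R$ into $G^{-1}$, which is why its formula reads $w(x)=G(R+d(x))$.
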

\begin{proof}
Let be $\Omega_{*}=D^{*}+\delta B$, and $v$ the radial minimizer of $I_{\beta,\delta}(D^{*})$.
 Since $\Omega = D+ \delta B$, Steiner formula \eqref{per} and  \eqref{afapp} imply 
$P(\Omega) \leq P(\Omega_*)$.

We denote by $v_m=v(R+\delta) = \min_{\Omega_{*}} v$ and by $  \max_{\Omega_{*}} v=v(R)=1$. Being $v$ radial, the modulus of the gradient of $v$ is constant on the level lines of $v$.

Let us consider the function 
\[
g(t)=|Dv|_{v=t},\quad v_m<t \le 1.
\]

 Let $d(x)$ be the distance of a point $x\in \Omega$ from $D$ and set  
 \[
 w(x)=G\left(R+d(x)\right),\quad  x \in\Omega,\qquad \text{where } G^{-1}(t)=R+\displaystyle \int_{t}^{1} {\frac{1}{g(s)}}ds.
 \]
 By construction $w \in H^1(\Omega)$ and being $G$ decreasing it results:
 
 \begin{equation}
 \label{testfunction}
 \begin{array}{ll}
 &   \max_{\Omega} w = \left.w\right|_{\de D}= 1= G(R); \cr \\
 &  w_m= \min_{\Omega} w = \left.w\right|_{\de \Omega} =G(R+\delta) = v_m \qquad \cr  \\
 &  |Dw|_{w=t} = |Dv|_{v=t} = g(t) \quad w_m \le t \le 1,
 \end{array}
 \end{equation}
Then
\[
I_{\beta,\delta}(D)\le \int_{\Omega\setminus D}|\nabla w|^{2}dx +\beta\int_{\de\Omega}w^{2}d\mathcal H^{n-1}
\]
Let 
\[
E_t = \{ x \in \Omega : w(x) >t\} = \{ x \in \Omega : d(x) < G^{-1}(t)\}=D+G^{-1}(t)B,
\]
and
\[
B_t = D\{ x \in \Omega_{*}: v(x) >t\}. 
\]  
Being $W_{n-1}(D)=W_{n-1}(D^{*})$, using the Steiner formula and  \eqref{afapp}, we get for $w_m < t \le 1$ and $\rho=G^{-1}(t)$
\begin{align*}
P(E_{t})=
P(D+\rho B)&=n\sum_{k=0}^{n-1}\binom{n-1}{k}W_{k+1}(D)\rho^k\\
&\le n\sum_{k=0}^{n-1}\binom{n-1}{k}W_{k+1}(D^{*})\rho^k = P(D^{*}+\rho B)=P(B_{t}).
\end{align*}
Hence
\[
\int_{w=t}|Dw|\,\h =g(t) P(E_t)\le g(t) P(B_t)= \int_{v=t}|Dv| \,\h, \qquad w_m < t \le 1
\]
then, by co-area formula and \eqref{testfunction},
\begin{eqnarray*}\label{energie}
\int_{\Omega\setminus D} |Dw|^2\,dx&=&\int_{w_m}^{1} g(t)P(E_t)\,dt
\\
& \le& \int_{w_m}^{1} g(t)P(B_t)\,dt \le \int_{v_m}^{1} g(t)P(B_t)\,dt 
\\
&=& \int_{\Omega_{*}\setminus D^{*}}|Dv|^2\,dx. \notag
\end{eqnarray*}

As regards the boundary behavior, since by construction $w = w_{m}=v_m$ on $ \de \Omega$ (see \eqref{testfunction}), and $P(\Omega) \leq P(\Omega_*)$, we have
\begin{equation*}
\int_{\partial \Omega} w^2 \, \h = w_{m}^{2} P(\Omega) \le v_m^2 P(\Omega_{*}) = \int_{\partial \Omega_{*}} v^2 \, \h.
\end{equation*}
%
%
%
\end{proof}
\begin{rem}
Let us observe that whenever $W_{n-1}(D)=W_{n-1}(D^{*})$, then by the Steiner formula and the Aleksandrov-Fenchel inequalities then $\left|\Omega\right|-\left|D\right| \le \left|\Omega_{*}\right|-\left|D^{*}\right|$.
\end{rem}



\section{Remarks and Open Problems}
As regards the infimum of $I_{\beta,\delta}(D)$ among sets with fixed perimeter, it is easy to show that, without any geometrical restriction, such an infimum is zero. For example, for $n=2$, even limiting the analysis to open connected sets is not enough to bound the infimum away from zero. To this aim we consider the sequence of sets $Q_{k}=\left]0,\frac{1}{k}\right[^{2}$. Then we construct the set $D_{k}$ by removing from $Q_{k}$ $k^{2}$ disjoint closed squares of side $\frac{1}{4k^{2}}-\frac{1}{k^{3}}$ each ($k\ge 5$). Then for $k$ sufficiently large, it holds that $I_{\delta,\beta}(D_{k})=I_{\delta,\beta}(Q_{k})$. Hence
\[
I_{\beta,\delta}(D_{k})\to 0\quad\text{ as }k\to +\infty,
\] 
while for any $k$
\[
P(D_{k})= 1.
\]
The class of simply connected sets suffers the same problem. 
However, among planar convex sets of given perimeter, any minimising sequence for $I_{\beta,\delta}(D)$ admits, by Blaschke-Santal\'o Theorem, a subsequence which converges (possibly degenerating into a segment) to some convex body of same perimeter (to be understood as twice its length, or Minkowski content, in case of a segment). Since the functional $I_{\beta,\delta}(\cdot)$ is continuous under converging sequence of convex bodies, a minimum is achieved. This leads to the following

\vskip .2cm{\textbf{Open Problem 1.}} \textit{Find the minimum of $I_{\beta,\delta}(\cdot)$ in the class of convex planar sets of given perimeter.}\vskip .2cm

Another closely related question is 

\vskip .2cm{\textbf{Open Problem 2.}} \textit{Prove or disprove in any dimension that the minimum of $I_{\beta,\delta}(\cdot)$ among sets of given volume exists.}\vskip .2cm

For $\beta\to\infty$ the functional $I_{\beta,\delta}(D)$ converges to the capacity of the set $\bar D$ with respect to $D+\delta B$. For the capacity one can employ standard techniques, like the Schwarz symmetrization, to deduce that a minimum is achieved on balls. This suggests that, at least for large $\beta$, balls might provide a positive answer to Open Problem 2. 

Yet another important question is 

\vskip .2cm{\textbf{Open Problem 3.}} \textit{Prove or disprove in any dimension that for positive constants $\beta$ and $0<m<M$, there exists a minimum of
$$\{I_{\beta}(D;\Omega): D\subseteq\Omega,\, |D|=m,\, |\Omega|\le M \}.$$}\vskip .2cm
 
Again, comparison with Capacitary problem and common sense suggest that the minimum is achieved when $\Omega$ and $D$ are concentric balls.
The fact that one can not prescribe the exact volume of $\Omega$ but only an upper bound is due to the some counterintuitive behaviour
of the functional $I_{\beta}(D;\Omega)$ which is very peculiar and that can be summarised in next two Propositions. 

\begin{prop}
If $\beta\ge \frac{n-1}{R}$ then $I_{\beta,\delta}(B_{R})$ is decreasing in $\delta$. When $\beta<\frac{n-1}{R}$ 
then $I_{\beta,\delta}(B_{R})$ is increasing when $\delta\le\frac{n-1}{\beta}-R$ and decreasing when $\delta\ge\frac{n-1}{\beta}-R$
(see also \cite[\S 3.3.1--3.3.2]{IDBL}).
\end{prop}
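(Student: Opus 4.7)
The plan is to exploit the rotational symmetry of the configuration $(B_R, B_{R+\delta})$ to reduce the problem to an explicit ODE and then read off monotonicity in $\delta$ from a single-variable calculus argument.

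First I would observe that, since $D = B_R$ and $\Omega = B_R + \delta B = B_{R+\delta}$, the minimizer $u$ of \eqref{disp} is radial: $u(x) = U(|x|)$ on the annulus $B_{R+\delta}\setminus \bar B_R$. Harmonicity gives $(r^{n-1} U')' = 0$, so $U'(r) = C\, r^{1-n}$ for some constant $C$, and integrating from $R$ with $U(R)=1$ yields
\[
U(\rho) = 1 + C\,\Phi(\rho), \qquad \Phi(\rho) := \int_R^\rho s^{1-n}\,ds,
\]
where $\rho := R+\delta$ and $\Phi$ equals $\log(\rho/R)$ if $n=2$ and $(R^{2-n}-\rho^{2-n})/(n-2)$ if $n\ge 3$.

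The key observation is that the Robin condition $U'(\rho)+\beta U(\rho)=0$ allows one to rewrite the heat dispersion very simply. Indeed,
\[
I_{\beta,\delta}(B_R)=\beta\int_{\partial B_\rho} u\,\h=\beta\,U(\rho)\cdot n\omega_n\rho^{n-1}=-U'(\rho)\cdot n\omega_n\rho^{n-1}=-n\omega_n\,C.
\]
Solving the Robin condition for $C$ in terms of $\rho$ then gives
\[
I_{\beta,\delta}(B_R) = \frac{n\omega_n\,\beta}{F(\rho)}, \qquad F(\rho):=\rho^{1-n}+\beta\,\Phi(\rho),
\]
so the monotonicity of $I_{\beta,\delta}(B_R)$ in $\delta$ is exactly opposite to that of $F$ in $\rho$ (one checks $F>0$ throughout, e.g.\ because $U(\rho)>0$).

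Finally I would differentiate:
\[
F'(\rho) = -(n-1)\rho^{-n}+\beta\rho^{1-n} = \rho^{-n}\bigl(\beta\rho-(n-1)\bigr).
\]
Thus $F$ has a unique critical point at $\rho^\star=(n-1)/\beta$, is strictly decreasing for $\rho<\rho^\star$ and strictly increasing for $\rho>\rho^\star$. Translating back to $\delta=\rho-R$: if $\beta\ge(n-1)/R$ then $\rho^\star\le R$, so $F$ is increasing on $[R,\infty)$ and $I_{\beta,\delta}(B_R)$ is decreasing in $\delta$; if $\beta<(n-1)/R$ then $\rho^\star>R$, and $I_{\beta,\delta}(B_R)$ is increasing on $[0,(n-1)/\beta-R]$ and decreasing afterwards. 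There is no real obstacle here; the only technical point worth stating clearly is the identity $I_{\beta,\delta}(B_R)=-n\omega_n C$, which collapses the two boundary contributions into a single expression and makes the monotonicity analysis reduce to the sign of $\beta\rho-(n-1)$.
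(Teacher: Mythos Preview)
Your argument is correct and follows essentially the same route as the paper: solve the radial harmonic ODE on the annulus, express $I_{\beta,\delta}(B_R)$ as an explicit function of $\rho=R+\delta$, and differentiate to locate the sign change at $\rho=(n-1)/\beta$. The paper splits into the cases $n=2$ and $n\ge 3$ with explicit constants $c_n(\beta,\delta)$ and differentiates $c_2$ directly, whereas your unified formulation $I_{\beta,\delta}(B_R)=n\omega_n\beta/F(\rho)$ with $F(\rho)=\rho^{1-n}+\beta\Phi(\rho)$ is a bit more streamlined but otherwise the same computation.
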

\begin{proof}
If $D=B_{R}$ is a ball with radius $R$, then obviously $\Omega=B_{R}+\delta B=B_{R+\delta}$, and the minimum of \eqref{disp}, $v(x)=v(r)$, is
\[
v(r)=
\begin{cases}
1-c_{2}(\beta,\delta)\log\dfrac{r}{R}, &\text{if }n=2\\[.3cm]
1-c_{n}(\beta,\delta)\left(1-\dfrac{R^{n-2}}{r^{n-2}}\right) &\text{if }n>2,
\end{cases}
\qquad r\in[R,R+\delta]
\]
with
\[
\left\{
\begin{array}{lr}
c_{2}(\beta,\delta)=\ds \frac{\beta}{\frac{1}{R+\delta}+\beta \log\frac{R+\delta}{R}},& \\[.5cm]
c_{n}(\beta,\delta)=\ds \frac{\beta}{(n-2)\frac{R^{n-2}}{(R+\delta)^{n-1}}+ \beta\left(1-\frac{R^{n-2}}{(R+\delta)^{n-2}}\right)} &\;\;\;\;(n>2).
\end{array}\right.
\]
In particular, if $n=2$, computing the heat dispersion $I_{\beta,\delta}(B_{R})$, we have
\begin{equation*}
I_{\beta,\delta}(B_{R})=\int_{\de B_{R}} \frac{\de u}{\de \nu}d\mathcal H^{1}= 2\pi c_{2}(\beta,\delta).
\end{equation*}
We recall that $\nu$ is the outer normal to $B_{R+\delta}\setminus \bar B_{R}$. 

Let us observe that
\[
\left.I_{\beta,\delta}(B_{R})\right|_{\beta=0}=0,\quad \left.I_{\beta,\delta}(B_{R})\right|_{\beta=+\infty}=\frac{2\pi}{\log\left(1+\frac{\delta}{R}\right)}.
\]
We have
\[
\de_{\delta} \left[c_{2}(\beta,\delta) \right]\left[\frac{1}{R+\delta}+\beta \log\frac{R+\delta}{R}\right] =\frac{c_{2}(\beta,\delta)\beta}{(R+\delta)^{2}}
\left(\frac1\beta- R-\delta\right)
\]
that is
 
\[
\de_{\delta} \left[ I_{\beta,\delta}(B_{R})\right] < 0 \iff \delta > \frac{1}{\beta}-R. 
\]
A similar computation provides the bound for $n>2$.
\end{proof}

Therefore in the regime $\beta$ ``small'', insulation increases the heat dispersion if the insulator thickness is below a certain threshold value. A more careful analysis brings to a sharper result.

\begin{prop}
\label{isozero}
Let $D=B_{R}(0)$ and $\beta<\frac{n-1}{R}$. Then there exists a positive constant $\delta_{0}$ such that for any bounded domain $\Omega$, with $D\subset \Omega$ and $|\Omega|-|D|<\delta_{0}$, then 
\[
I_{\beta}(B_{R};\Omega)>I_{\beta}(B_{R};B_{R}).
\]
\end{prop}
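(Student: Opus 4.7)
The plan is to reduce the general domain $\Omega$ to the concentric spherical case $B_R+\delta B$ via Schwarz symmetrization of the minimizer, and then invoke the strict monotonicity of $\delta\mapsto I_{\beta,\delta}(B_R)$ at $\delta=0$, which follows from the preceding proposition since $\beta<(n-1)/R$. The hard part will be handling the Robin surface integral, where the usual P\'olya--Szeg\H{o} machinery is silent; the crucial observation is that the Schwarz symmetrand takes a constant value on the outer sphere, so the boundary comparison collapses to the standard isoperimetric inequality.

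First I would take a minimizer $u$ of $I_{\beta}(B_R;\Omega)$: thus $u\equiv 1$ on $\bar B_R$, $u$ is harmonic on $\Omega\setminus\bar B_R$, and the Robin condition holds on $\partial\Omega$. The maximum principle gives $0\le u\le 1$ on $\Omega$, and, setting $m:=\min_\Omega u\in[0,1)$, the minimum is attained on $\partial\Omega$, so that $u\ge m$ on $\partial\Omega$ in the sense of traces. Let $R_\Omega$ be defined by $\omega_n R_\Omega^n=|\Omega|$, write $\delta_\Omega:=R_\Omega-R>0$, and let $u^\ast$ denote the Schwarz symmetrization of $u|_\Omega$ on $B_{R_\Omega}$. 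Since $u\equiv 1$ on $B_R$ forces $|\{u=1\}|\ge|B_R|$, the centered ball $\{u^\ast=1\}$ contains $B_R$, hence $u^\ast\equiv 1$ on $B_R$; moreover $u^\ast$ is radial and non-increasing on $B_{R_\Omega}$, with constant trace $m$ on $\partial B_{R_\Omega}$.

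Then P\'olya--Szeg\H{o} yields $\int_{B_{R_\Omega}}|\nabla u^\ast|^2\,dx\le\int_\Omega|\nabla u|^2\,dx$, while for the boundary integrals the constant trace of $u^\ast$, together with $u\ge m$ on $\partial\Omega$ and the usual isoperimetric inequality $P(B_{R_\Omega})\le P(\Omega)$, gives
\[
\int_{\partial B_{R_\Omega}}(u^\ast)^2\,\h \;=\; m^2\,P(B_{R_\Omega}) \;\le\; m^2\,P(\Omega) \;\le\; \int_{\partial\Omega}u^2\,\h.
\]
Since $u^\ast$ is an admissible competitor for $I_{\beta,\delta_\Omega}(B_R)$, summing the two inequalities gives $I_{\beta,\delta_\Omega}(B_R)\le I_{\beta}(B_R;\Omega)$.

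To finish, since $\beta<(n-1)/R$ I choose $\delta_0>0$ so small that $|\Omega|-|B_R|<\delta_0$ forces $\delta_\Omega\in\bigl(0,\,\tfrac{n-1}{\beta}-R\bigr)$. On this interval the preceding proposition (together with the strict sign of the derivative explicitly exhibited in its proof) gives
\[
I_{\beta,\delta_\Omega}(B_R) \;>\; I_{\beta,0}(B_R) \;=\; \beta\,P(B_R) \;=\; I_{\beta}(B_R;B_R),
\]
and chaining this with the bound from the previous step delivers the claim.
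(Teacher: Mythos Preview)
Your argument has a genuine gap at the P\'olya--Szeg\H{o} step. The inequality
\[
\int_{B_{R_\Omega}}|\nabla u^{\ast}|^{2}\,dx \;\le\; \int_{\Omega}|\nabla u|^{2}\,dx
\]
is \emph{not} available for a function $u\in H^{1}(\Omega)$ whose trace on $\partial\Omega$ is non-constant. The classical P\'olya--Szeg\H{o} inequality requires that the extension of $u$ by its infimum $m$ outside $\Omega$ lie in $H^{1}(\R^{n})$, i.e.\ that $u\equiv m$ on $\partial\Omega$. Under the Robin condition $\partial_\nu u+\beta u=0$ the trace of $u$ is generically non-constant, so the extension has a jump across $\partial\Omega$ and the inequality fails. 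A concrete two-dimensional counterexample to the bare inequality is $u(x)=x_{1}$ on $\Omega=(0,1)^{2}$: one computes $\int_{\Omega}|\nabla u|^{2}=1$, whereas the Schwarz rearrangement $u^{\ast}(r)=1-\pi r^{2}$ on the disk of area $1$ gives $\int|\nabla u^{\ast}|^{2}=2\pi>1$. Thus the comparison $I_{\beta,\delta_\Omega}(B_R)\le I_\beta(B_R;\Omega)$ is unjustified, and in fact this failure is exactly why Robin problems resist naive symmetrization (cf.\ the Bossel--Daners approach to the Robin Faber--Krahn inequality).

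The paper circumvents this by arguing directly on the energy of the minimizer $u$: it proves an isoperimetric-type lemma bounding $P(\Omega)-P(B_R)$ from below by $C\,(|\Omega|-|B_R|)$ with an explicit constant $C=C(R,\delta_0)$, then combines the coarea formula, Fubini's theorem and Young's inequality to show that
\[
\mathcal H^{n-1}(\Gamma_1)+(C-\beta)\int_\Omega u^{2}\,dx \;\le\; \frac{1}{\beta}\int_\Omega|\nabla u|^{2}\,dx+\int_{\Gamma_0}u^{2}\,d\mathcal H^{n-1},
\]
from which the claim follows once $\delta_0$ is small enough that $C>\beta$ (this is where $\beta<(n-1)/R$ enters, since $C\to (n-1)/R$ as $\delta_0\to 0$). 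No symmetrization is used.
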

To prove this result, we first need the subsequent Lemma \ref{perimetribound}. We introduce first the following notation. 

Let $D=B_{R}\subset \Omega$, and denote by 
\[
P=P(B_{R})=n\omega_{n}R^{n-1},\; V=|B_{R}|=\omega_{n}R^{n},\; \Delta P = P(\Omega) - P(B_{R}),\; \Delta V= |\Omega|-|B_{R}|.
\]
We first need the following Lemma.
\begin{lemma}
\label{perimetribound}
Let be $D=B_{R}$, and $B_{R}\subset\Omega$. For any $\delta_{0}>0$, there exists a constant 
\begin{equation}
\label{costante}
C=\frac{n\omega_{n}R^{n-1}}{\delta_{0}}\left[\left(1+\frac{\delta_{0}}{\omega_{n}R^{n}}\right)^{1-\frac{1}{n}}-1\right]
\end{equation}
such that if $\Delta V\le \delta_{0}$ it holds that
\[
\Delta P \ge C \Delta V.
\]
\end{lemma}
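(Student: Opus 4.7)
The plan is to combine the classical isoperimetric inequality with the concavity of a single power function. Set $V=|B_R|=\omega_n R^n$ and recall that $B_R$ is extremal for isoperimetry, so $P(B_R)=n\omega_n^{1/n}V^{1-1/n}=n\omega_n R^{n-1}$, while the generic competitor satisfies $P(\Omega)\geq n\omega_n^{1/n}|\Omega|^{1-1/n}$. Subtracting and writing $|\Omega|=V+\Delta V$ yields at once
\[
\Delta P \;\geq\; n\omega_n^{1/n}\bigl[(V+\Delta V)^{1-1/n}-V^{1-1/n}\bigr].
\]
This is the only geometric ingredient; everything that follows is one-variable calculus.

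Next I would linearize the right-hand side in $\Delta V$. Since $0<1-1/n<1$, the function $f(x)=x^{1-1/n}$ is concave on $(0,\infty)$, so the secant slope $s\mapsto (f(V+s)-f(V))/s$ is nonincreasing in $s>0$. Consequently, whenever $0<\Delta V\leq\delta_0$,
\[
(V+\Delta V)^{1-1/n}-V^{1-1/n} \;\geq\; \frac{(V+\delta_0)^{1-1/n}-V^{1-1/n}}{\delta_0}\,\Delta V.
\]
It is precisely this step that forces the slope in the lemma to be evaluated at the worst-case thickness $s=\delta_0$ rather than infinitesimally, and thereby explains the shape of the claimed constant.

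Combining the two estimates one obtains $\Delta P\geq C\,\Delta V$ with
\[
C=\frac{n\omega_n^{1/n}\bigl[(V+\delta_0)^{1-1/n}-V^{1-1/n}\bigr]}{\delta_0}.
\]
Factoring $V^{1-1/n}=\omega_n^{1-1/n}R^{n-1}$ out of the bracket and using $n\omega_n^{1/n}\cdot V^{1-1/n}=n\omega_n R^{n-1}$ rewrites $C$ in exactly the form \eqref{costante}. I do not foresee any serious obstacle: the argument is isoperimetry plus monotonicity of a concave difference quotient, and the only subtle point is to apply the concavity inequality on the correct interval, which is exactly why the hypothesis $\Delta V\leq\delta_0$ enters.
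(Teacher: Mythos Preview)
Your proof is correct and follows the same approach as the paper: apply the isoperimetric inequality to obtain $\Delta P \ge n\omega_n^{1/n}\bigl[(V+\Delta V)^{1-1/n}-V^{1-1/n}\bigr]$, then use monotonicity of the difference quotient of the concave function $x\mapsto x^{1-1/n}$ to replace $\Delta V$ by the worst case $\delta_0$. If anything, your write-up is slightly more explicit than the paper's about why that last monotonicity step holds.
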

\begin{proof}
By the isoperimetric inequality it holds that
\[
n^{\frac{n}{n-1}}\omega_{n}^{\frac{1}{n-1}} = \frac{P^{\frac{n}{n-1}}}{V} \le \frac{(P+\Delta P)^{\frac{n}{n-1}}}{V+\Delta V}.
\]
Hence
\[
\Delta P \ge 
n \omega_{n}^{\frac{1}{n}}\left(\omega_{n} R^{n}+\Delta V\right)^{1-\frac{1}{n}}-n \omega_{n} R^{n-1}
\]
and then, if $\Delta V\le \delta_{0}$, we have that
\[
\frac{\Delta P}{\Delta V} \ge \frac{n \omega_{n}R^{n-1}}{\Delta V}\left[\left(1+\frac{\Delta V}{\omega_{n}R^{n}}\right)^{1-\frac{1}{n}}-1\right] \ge C,
\]
where $C$ is the constant in \eqref{costante}, and this complete the proof.
\end{proof}

\begin{proof}[Proof of Theorem \ref{isozero}]


Let $u$ be the minimizer of $I_{\beta}(B_{R};\Omega)$.  
Let us consider
\[
\Sigma=\Omega\setminus B_{R},\quad \Gamma_{m}=\de \Omega\setminus \de B_{R}, \quad \Gamma_{t}=\de \{u>t\} \setminus \de B_{R}, \quad \Gamma_{1}= \de B_{R}\cap \Omega,
\]
and
\[
p(t)=P(\{u>t\}\cap \Sigma),\qquad\text{for a.e. }t>0.
\]
We want to show that
\[
I_{\beta}(B_{R};B_{R})= \beta P(B_{R}) < I_{\beta}(B_{R};\Omega)=\int_{\Omega} |\nabla u|^{2}dx+\beta\int_{\de \Omega} u^{2}d\mathcal H^{n-1},
\]
or equivalently
\begin{equation}
\label{fine}
\mathcal H^{n-1}(\Gamma_{1}) < \frac{1}{\beta}\int_{\Omega} |\nabla u|^{2}dx + \int_{\Gamma_{0}} u^{2}d\mathcal H^{n-1},
\end{equation}
Then, using  coarea formula and Fubini theorem we have
\begin{align}
\notag \int_{0}^{1} t p(t) dt &= 
\int_{0}^{1} t \mathcal H^{n-1}(\Gamma_{1}) dt + \int_{0}^{1} t \mathcal H^{n-1}(\Gamma_{t}\cap \Omega) dt + \int_{0}^{1} t \mathcal H^{n-1}(\Gamma_{t}\cap \de\Omega) dt \\[.3cm]
&=\frac{\mathcal H^{n-1}(\Gamma_{1})}{2}+ \int_{\Omega} u |\nabla u|\,dx + \frac{1}{2} \int_{\Gamma_{0}} u^{2}\, d\mathcal H^{n-1}. \label{uno}
\end{align}
By Lemma \ref{perimetribound} it holds that if fixed $\delta_{0}>0$, if $|\Sigma|<\delta_{0}$ then 
\[
p(t)-2\mathcal H^{n-1}(\Gamma_{1}) \ge  C \mu (t),
\]
where $C$ is the constant given in \eqref{costante}. Hence, substituting in \eqref{uno} we get
\[
\mathcal H^{n-1}(\Gamma_{1}) +\frac C2 \int_{\Omega} u^{2}dx \le
 \frac{\mathcal H^{n-1}(\Gamma_{1})}{2}+ \int_{\Omega} u |\nabla u|\,dx + \frac{1}{2} \int_{\Gamma_{0}} u^{2}\, d\mathcal H^{n-1}.
\]
On the other hand,
\[
\int_{\Omega} u|\nabla u|dx\le \frac{1}{2\eps} \int_{\Omega}u^{2}dx+\frac{\eps}{2}\int_{\Omega} |\nabla u|^{2}dx.
\]
Choosing $\eps=\frac 1\beta$ it holds that
\begin{equation}
\label{quasifine}
 \mathcal H^{n-1}(\Gamma_{1})  + \left(C-\beta\right)\int_{\Omega} u^{2}dx \le  \frac 1\beta  \int_{\Omega}|\nabla u|^{2}dx+ \int_{\Gamma_{0}} u^{2}\,d\mathcal H^{n-1}.
\end{equation}
Therefore, being $R<\frac{n-1}{\beta}$, for $\delta_{0}$ sufficiently small the constant $C$ is larger than $\beta$. Hence the inequality \eqref{quasifine} implies \eqref{fine}.
\end{proof}

\section*{Acnowledgements}
This work has been partially supported by a MIUR-PRIN 2017 grant ``Qualitative and quantitative aspects of nonlinear PDE's'' and by GNAMPA of INdAM.

\end{document}